\begin{document}

\title*{  The weak type $(1,p)$  for  convolution operators on locally compact  groups}

\author{Duv\'an Cardona}
\institute{Duv\'an Cardona \at Pontificia Universidad Javeriana, Mathematics Department, Bogot\'a-Colombia. \email{duvanc306@gmail.com, cardonaduvan@javeriana.edu.co}}
%
%
\maketitle

\abstract*{In this paper we provide necessary and sufficient conditions for the $\textnormal{weak}(1,p)$ boundedness, $1< p<\infty,$  of convolution operators on locally compact (Hausdorff) topological groups. So, we generalize a classical result due to Sobolev-Hardy-Littlewood and Stepanov. Applications to Fourier multipliers on  Lie groups also are given. MSC2010: 42B15 (primary), 22E30 (secondary).}

\abstract{In this paper we provide necessary and sufficient conditions for the $\textnormal{weak}(1,p)$ boundedness, $1< p<\infty,$  of convolution operators on locally compact (Hausdorff) topological groups. So,  we generalize a classical result due to Sobolev-Hardy-Littlewood and Stepanov.  Applications to Fourier multipliers on  Lie groups also are given. MSC2010: 42B15 (primary), 22E30 (secondary). }

\section{Introduction}

Let us consider a locally compact (Hausdorff) topological group $G,$ endowed with its unique (up by constants) left invariant Haar measure $d\mu,$ which we also  assume inversion invariant.  Given a convolution operator of the form
\begin{equation}\label{T}
  Tf(x)\equiv  k*f(x):=\int\limits_{G}k(y)f(y^{-1}x)d\mu(y),\,\,f\in C_{0}(G)
\end{equation}
 defined on the set of  functions with compact support, in this paper we provide necessary and sufficient conditions on the kernel $k,$ in order that $T$ can be extended to a bounded operator from $L^{1}(G)$ into $L^{p,\infty}(G),$ $1<p<\infty.$ This means that the following inequality
 \begin{equation}\label{WT}
 \Vert Tf \Vert_{L^{p,\infty}(G)}:=\sup_{\alpha>0}  \mu\{s\in G:|Tf(s)|>\alpha\}^{\frac{1}{p}}\leq C\Vert f \Vert_{L^1(G)}
 \end{equation}
holds true for some fixed constant $C$ and every $f\in L^1(G).$ This problem has been solved for $G=\mathbb{R}^n$ by Sobolev, Hardy, Littlewood and  Stepanov (see, e.g. \cite{Stepanov}). They showed that on $\mathbb{R}^n,$ a  convolution operator $T$ is of weak type $(1,p),$ $1<p<\infty,$ if and only if its convolution kernel  belongs to $L^{p,\infty}(\mathbb{R}^n).$ Our main goal here is to generalize the Sobolev-Hardy-Littlewood-Stepanov result to every compact locally (Hausdorff) topological group (see Theorem \ref{Thm}). The characterization of the  weak $(1,1)$ boundedness for convolution operators, is much more complicated to obtain,  and in general one only have necessary conditions depending on the type of groups as well as the structure of the convolution operator under consideration  (see, e.g., the books by Stein, Grafakos and Duoandikoetxea \cite{Duo,GrafakosBook}and \cite{Ste} and its historical notes).

The problem of find satisfactory conditions in order to assure the $L^p$-boundedness of convolution operators is classical. Convolution operators with singular kernels were treated trough of the  Calder\'on and Zygmund  work (see e.g., \cite{CalderonZygmund}).  Necessary and sufficient conditions for the boundedness of convolution operators on $L^p$ spaces are known for $p=1$ and $p=2$ (see, e.g., Grafakos \cite{GrafakosBook}). For convolution operators, there exist nice connections between the weak $(q,p)$ boundedness (i.e. boundedness from $L^q$ into $L^{p,\infty}$) and its strong $(q,p)$ boundedness (i.e. boundedness from $L^q$ into $L^{p}$). In 1982, G. Pisier conjectured  that every convolution operator bounded on $L^{p,\infty}(G)$ is automatically bounded on $L^p(G)$ if $1<p<2$ and $G$ is compact. This conjecture was proved by H. Shteinberg \cite{Shteinberg}. 

It was proved by L. Colzani in \cite{Colzani} that convolution operators  $T$ bounded on $L^{p,\infty}$ also are bounded on $L^p,$ when the group is non-compact; Colzani also proves that  a bounded convolution operator on the Lorentz space $L^{p,q}(G),$ $0<p<1$ and $0<q\leq \infty$ is a convolution operator with  a discrete measure $m=\sum a_{k}\delta_{x_k}$ where $\{x_k\}$ is a  sequence in $G$ and $\{a_k\}$ is a   sequence in suitable discrete Lorentz spaces. It was proved by D. Oberlin \cite{Oberlin} for $0<p<1$ and $G $ locally compact, that a convolution operator $T$ is weak $(p,p)$ if and only if $T$ has the form $\sum a_{k}L_{x_k}$ where $L_{x}$ denotes the left-translation at $x$ and the sequence $\{a_k\}$ satisfies $|a_k|=O(k^{-1/p});$ Oberlin also proves that, for $0<p<q\leq 2, $ and $G$  amenable, the weak $(p,p)$ boundedness of a convolution operator implies its $L^q$ boundedness. The same author has characterized in a similar way those bounded operators on $L^p(G),$ $0<p<1$ (see \cite{Oberlin2}) by observing that those convolution operators are given by convolutions with discrete measures $(m_j)\in\ell^p$.

In general, bounded convolution operators on $L^{1,\infty}(G)$ in suitable groups, have a restriction to $L^1(G)$  given by convolution with discrete measures, but, the operators are not uniquely determinate by those measures; this fact was observed by P. Sj\"ogren in \cite{Sjo1} where he also study the form of such measures for $G=\mathbb{R},\mathbb{R}/\mathbb{Z}$ and $G=\mathbb{Z}$. The works Colzani and Sj\"ogren \cite{ColzSjo}  and  Sj\"ogren\cite{Sjo2} give a study of convolution operators on the Lorentz space $L^{1,q},$ $1<q<\infty.$ The classical work Cowling and Fournier \cite{Cowlin} illustrates some inclusions between spaces of convolutors. Other properties of the complex valued and vector valued convolution operators  can be found in \cite{Bakes,Candeal,Candeal2,Lai}.

On several classes of Lie groups, which also are locally compact topological groups, convolution operators appear as multipliers of the Fourier transform. Classical references in this subject are Marcinkiewicz\cite{Marc}, Mihlin \cite{Mihlin}, Alexopoulos\cite{alexo}, Fefferman\cite{Fef}, the seminal work of H\"ormander\cite{Hormander1960}. In a much more recent context, multipliers have been considered in Ruzhansky and Wirth\cite{RW}, Fischer and Ruzhansky\cite{FR}, Fischer\cite{Fischer2},  the works \cite{RR,RR2,RR3} and references therein.

In the next section we prove our main result and we discuss some applications for multiplier on Lie groups.

\section{ $\textnormal{Weak-}\ell^p$ estimates for convolution operators }
In this section we prove our main result. Our main tool will be the following lemma.

\begin{lemma}[Approximate identity]\label{Lemma}
  Let $G$ be a locally compact (Hausdorff) topological group. The exist complex functions $\delta_\varepsilon,$ $\varepsilon>0$ on $G$ satisfying:
  \begin{itemize}
    \item there exists a constant $c>0,$ such that $\Vert \delta_{\varepsilon} \Vert_{L^1(G)}\leq c.$
    \item $\int_{G}\delta_\varepsilon(x)d\mu(x)=1$ for every $\varepsilon>0,$
    \item for every neighborhood $V$ of the identity element $e$ of the group, we have $\int_{G-V}|\delta_\varepsilon (x) |d\mu(x)\rightarrow 0$ as $\varepsilon \rightarrow 0.$
        \item For $1\leq p<\infty,$ $\Vert f\ast \delta_\varepsilon-f \Vert_{L^p(G)}\rightarrow 0$ as $\varepsilon \rightarrow 0.$
            \item If $f$ is continuous in some compact set $K$ of $G,$ then $ f\ast \delta_\varepsilon\rightarrow f$ in $L^{\infty}(K).$
  \end{itemize}
\end{lemma}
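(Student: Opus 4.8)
The plan is to construct the family $(\delta_\varepsilon)$ explicitly as normalized indicators of shrinking neighborhoods of the identity, and then to reduce the two convergence statements to the continuity of translation in $L^p$ together with Minkowski's integral inequality. Since the Haar measure $\mu$ is assumed inversion invariant, $G$ is automatically unimodular (the measure $E\mapsto\mu(E^{-1})$ is a right Haar measure and coincides with $\mu$), so left and right translations are both $L^p$-isometries; I will use this freely. Let $\mathcal{B}$ be a base of compact symmetric neighborhoods of the identity $e$, directed by reverse inclusion, and for $U\in\mathcal{B}$ set
\begin{equation*}
\delta_U:=\frac{1}{\mu(U)}\chi_U,
\end{equation*}
where $\chi_U$ is the indicator of $U$; I regard $(\delta_\varepsilon)$ as the net so indexed, the symbol ``$\varepsilon\to 0$'' meaning convergence along $\mathcal{B}$. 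The first three bullets are then immediate: $\delta_U\ge 0$ gives $\Vert\delta_U\Vert_{L^1}=\int_G\delta_U\,d\mu=1$, so one may take $c=1$; and for any neighborhood $V$ of $e$ one has $U\subset V$ eventually, whence $\int_{G-V}|\delta_U|\,d\mu=0$ for all sufficiently small $U$.

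For the fourth bullet, I would first rewrite the convolution as a translation average. Using left invariance and inversion invariance to perform the substitution $y=xu^{-1}$ in $f\ast\delta_U(x)=\int_G f(y)\delta_U(y^{-1}x)\,d\mu(y)$, together with $\int_G\delta_U\,d\mu=1$, gives
\begin{equation*}
f\ast\delta_U(x)-f(x)=\int_G \delta_U(u)\bigl(f(xu^{-1})-f(x)\bigr)\,d\mu(u).
\end{equation*}
Writing $R_g f(x)=f(xg)$ for the right translation and applying Minkowski's integral inequality (legitimate because $\delta_U\ge 0$) yields
\begin{equation*}
\Vert f\ast\delta_U-f\Vert_{L^p(G)}\le \int_G \delta_U(u)\,\Vert R_{u^{-1}}f-f\Vert_{L^p(G)}\,d\mu(u)\le \sup_{u\in U}\Vert R_{u^{-1}}f-f\Vert_{L^p(G)},
\end{equation*}
the last step using that $\delta_U$ is a probability density. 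Since $U$ is symmetric, $u^{-1}$ ranges over $U$ as $u$ does, so the supremum tends to $0$ as $U\to\{e\}$ by continuity of translation in $L^p(G)$.

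The fifth bullet is handled by the same decomposition, but estimated in the sup-norm. For $f$ continuous on the compact set $K$ --- which I extend to a fixed compact neighborhood of $K$ using local compactness --- uniform continuity gives, for every $\eta>0$, a neighborhood $W$ of $e$ such that $|f(xu^{-1})-f(x)|<\eta$ whenever $x\in K$ and $u^{-1}\in W$. Then for $U\subset W\cap W^{-1}$,
\begin{equation*}
\sup_{x\in K}|f\ast\delta_U(x)-f(x)|\le \sup_{x\in K}\int_G \delta_U(u)\,|f(xu^{-1})-f(x)|\,d\mu(u)\le \eta,
\end{equation*}
so $f\ast\delta_U\to f$ uniformly on $K$.

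The main obstacle is the tool invoked for the fourth bullet: the continuity of the translation action $g\mapsto R_g f$ on $L^p(G)$, $1\le p<\infty$. This is where the structure of $G$ genuinely enters, and its proof rests on the density of $C_c(G)$ in $L^p(G)$ and the uniform continuity of compactly supported continuous functions on a locally compact group; once it is available, every convergence statement follows from the averaging estimates above.
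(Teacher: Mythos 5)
First, a point of comparison: the paper does not prove this lemma at all --- it is stated and then used in the proof of Theorem 2, with the standard theory of approximate identities implicitly invoked --- so there is no in-paper argument to measure yours against, and I can only judge the proposal on its own terms. For the first four bullets your proof is correct and is the standard construction: normalized indicators of a base of compact symmetric neighborhoods, unimodularity deduced from inversion invariance of $\mu$, the rewriting of $f\ast\delta_U-f$ as an average of right-translation differences, Minkowski's integral inequality, and continuity of translation on $L^p(G)$, $1\leq p<\infty$. One remark: your replacement of the index set $\varepsilon>0$ by a net over the neighborhood base is not a cosmetic convenience but a necessary repair of the statement. If a family genuinely indexed by $\varepsilon>0$ satisfied the third bullet, then $(\delta_{1/n})_{n}$ would be a sequential approximate identity; on a non-first-countable group such as $G=\mathbb{T}^{I}$ with $I$ uncountable this is impossible, because every $L^1$ function depends (up to null sets) on countably many coordinates, so one can pick a coordinate unused by all the $\delta_{1/n}$ and a corresponding neighborhood $V$ of $e$ for which $\int_{G-V}|\delta_{1/n}|\,d\mu$ stays bounded below. (This also means that in the paper's proof of Theorem 2 the ``subsequence $\varepsilon_k\to 0$'' obtained from Banach--Alaoglu should really be a subnet, but that is outside the lemma.)

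The one genuine gap is in your treatment of the fifth bullet. You ``extend $f$ to a compact neighborhood of $K$,'' but $f$ is already defined on all of $G$, and $f\ast\delta_U(x)$ for $x\in K$ depends on the values of $f$ \emph{outside} $K$; proving uniform convergence for an extension of $f|_K$ says nothing about $f\ast\delta_U$ itself. In fact, under the literal reading of the hypothesis (the restriction $f|_K$ is continuous) the assertion is false: take $G=\mathbb{R}$, $K=[0,1]$, $f=\chi_{[0,1]}$; then $f|_K\equiv 1$ is continuous, but $f\ast\delta_U(0)\to 1/2\neq f(0)$. The statement must be read as saying that $f$, as a function on $G$, is continuous at every point of $K$, and then the repair is not an extension but a covering argument yielding uniform continuity relative to $G$: given $\eta>0$, for each $x\in K$ choose a neighborhood $W_x$ of $e$ with $|f(y)-f(x)|<\eta/2$ for $y\in xW_x$, choose $V_x$ with $V_xV_x\subset W_x$, cover $K$ by finitely many sets $x_iV_{x_i}$, and put $W=\bigcap_i V_{x_i}$; then $|f(xu^{-1})-f(x)|<\eta$ for all $x\in K$ and $u^{-1}\in W$. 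Substituting this for your extension step, your sup-norm estimate closes the proof of the last bullet, and the rest of the proposal stands.
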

Now, we present our generalization of the Sobolev-Hardy-Littlewood-Stepanov Theorem.
\begin{theorem}\label{Thm}
  Let us assume that $G$ is a locally compact (Hausdorff) topological group. Let us consider a measurable function $k$ on $G.$ If $T$ is the convolution operator associated to $k,$ then $T:L^1(G)\rightarrow L^{p,\infty}(G) ,$ $1<p<\infty,$ extends to a bounded operator if and only if $k\in L^{p,\infty}(G)).$
    \end{theorem}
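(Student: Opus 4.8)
The plan is to prove the two implications separately, recognizing the statement as a weak-type Young inequality together with its sharpness. The sufficiency ($k\in L^{p,\infty}\Rightarrow T$ of weak type $(1,p)$) is the analytic heart of the classical theorem, and I would establish it by a Marcinkiewicz-type truncation of the kernel; the necessity is where the hypotheses peculiar to locally compact groups and the approximate identity of Lemma \ref{Lemma} enter, and this is where I expect the only genuine difficulty.

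For sufficiency, fix $f\in L^1(G)$ with $\|f\|_{L^1}=1$ and, given $\alpha>0$, split the kernel at height $\gamma=\alpha/2$ as $k=k_1+k_2$ with $k_1=k\,\chi_{\{|k|>\gamma\}}$ and $k_2=k\,\chi_{\{|k|\le\gamma\}}$. Since the inversion invariance of $\mu$ forces $G$ to be unimodular, the standard Young inequalities $\|k_2*f\|_{L^\infty}\le \|k_2\|_{L^\infty}\|f\|_{L^1}\le\gamma$ and $\|k_1*f\|_{L^1}\le\|k_1\|_{L^1}\|f\|_{L^1}=\|k_1\|_{L^1}$ are available. The first gives $\{|Tf|>\alpha\}\subset\{|k_1*f|>\alpha/2\}$, and Chebyshev's inequality then bounds $\mu\{|Tf|>\alpha\}$ by $\tfrac{2}{\alpha}\|k_1\|_{L^1}$. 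Writing $\|k_1\|_{L^1}=\int_0^\infty \mu\{|k|>\max(\gamma,s)\}\,ds$ and inserting the weak-type bound $\mu\{|k|>t\}\le \|k\|_{L^{p,\infty}}^p\, t^{-p}$, the resulting integral converges precisely because $p>1$ and yields $\|k_1\|_{L^1}\le \tfrac{p}{p-1}\|k\|_{L^{p,\infty}}^p\,\gamma^{1-p}$. Substituting $\gamma=\alpha/2$ produces $\alpha^p\,\mu\{|Tf|>\alpha\}\le C_p\|k\|_{L^{p,\infty}}^p$, which is exactly \eqref{WT}.

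For necessity, suppose $T$ extends boundedly with constant $C$. I would test $T$ against the approximate identity of Lemma \ref{Lemma}: since $\|\delta_\varepsilon\|_{L^1}\le c$ uniformly, the hypothesis gives $\|k*\delta_\varepsilon\|_{L^{p,\infty}}=\|T\delta_\varepsilon\|_{L^{p,\infty}}\le Cc$ for all $\varepsilon>0$. The goal is then to pass to the limit $\varepsilon\to0$ and recover $k$, exploiting that the weak-$L^p$ quasinorm is lower semicontinuous under almost-everywhere convergence. Because $T$ is defined on $C_0(G)$, the kernel $k$ is locally integrable; fixing a compact set $K$ and a compact neighborhood base of the identity, the third and fourth properties of Lemma \ref{Lemma} let me replace $k$ by its restriction to a fixed compact enlargement of $K$ and conclude $k*\delta_\varepsilon\to k$ in $L^1(K)$, hence along a subsequence $k*\delta_{\varepsilon_j}\to k$ almost everywhere on $K$. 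Fatou's lemma applied to the sets $\{|k*\delta_{\varepsilon_j}|>\alpha\}$ gives $\alpha^p\,\mu\{x\in K:|k(x)|>\alpha\}\le (Cc)^p$ for every $\alpha$, uniformly in $K$; the inner regularity of Haar measure then upgrades this to $\alpha^p\,\mu\{x\in G:|k(x)|>\alpha\}\le (Cc)^p$, i.e.\ $k\in L^{p,\infty}(G)$ with $\|k\|_{L^{p,\infty}}\le Cc$.

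The main obstacle is the localization step in the necessity argument: the tail of $\delta_\varepsilon$ outside a neighborhood of $e$ can, a priori, transport the mass of $k$ from arbitrarily far away into $K$, so the convergence $k*\delta_\varepsilon\to k$ on $K$ is not immediate from local integrability alone. Controlling this requires either that the $\delta_\varepsilon$ be taken with support shrinking to $e$ (which the standard construction permits) or a careful $L^1(K)$ estimate of the tail via the third property of Lemma \ref{Lemma}; once almost-everywhere convergence on compacta is secured, the Fatou and inner-regularity steps are routine. Everything else — the truncation in the sufficiency direction and the distribution-function bookkeeping — is elementary and uses $p>1$ only through the convergence of $\int_\gamma^\infty s^{-p}\,ds$.
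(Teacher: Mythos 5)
Your proposal is correct, and it takes a genuinely different route from the paper's in both directions. For sufficiency, the paper simply cites the weak-type Young inequality $\Vert k\ast f\Vert_{L^{p,\infty}}\leq \Vert k\Vert_{L^{p,\infty}}\Vert f\Vert_{L^{1}}$ (Theorem 1.2.13 of Grafakos), whereas you reprove it from scratch via the truncation $k=k\chi_{\{|k|>\alpha/2\}}+k\chi_{\{|k|\leq \alpha/2\}}$ and Chebyshev; your distribution-function computation is the standard proof of that inequality and is correct, including the observation that inversion invariance forces unimodularity, which is what legitimizes the two Young bounds you use. For necessity, both arguments begin identically, testing $T$ on the approximate identity to get $\Vert k\ast\delta_{\varepsilon}\Vert_{L^{p,\infty}}\leq Cc$ uniformly; after that the paper goes functional-analytic --- it identifies $L^{p,\infty}(G)$ with the dual of $L^{p',1}(G)$, extracts a weak$^{*}$ convergent subsequence $k\ast\delta_{\varepsilon_{k}}\rightharpoonup\varkappa$ by Banach--Alaoglu, and then identifies $\varkappa=k$ by pairing against $C_{0}(G)$ and moving the mollifier onto the test function --- while you go measure-theoretic: $L^{1}$ convergence of $k\ast\delta_{\varepsilon}$ to $k$ on compacta, a.e.\ convergence along a subsequence, the Fatou property (lower semicontinuity of the distribution function under a.e.\ convergence), and inner regularity. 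Your route is more elementary, avoiding both the Lorentz duality theorem (stated in Grafakos for $\sigma$-finite spaces) and Banach--Alaoglu (which in general yields subnets rather than subsequences unless $L^{p',1}(G)$ is separable). The localization obstacle you flag is real, and your proposed fix works: taking the $\delta_{\varepsilon}$ with compact supports shrinking to $e$, one has $k\ast\delta_{\varepsilon}=(k\chi_{K'})\ast\delta_{\varepsilon}$ on a compact set $K$ for a fixed compact $K'$, and the fourth property of Lemma \ref{Lemma} applied to $k\chi_{K'}\in L^{1}(G)$ gives the convergence. Notably, the paper's own proof silently needs the same control at the step $(k,\delta_{\varepsilon_{k}}\ast h)_{L^{2}}\to(k,h)_{L^{2}}$, which requires $k\in L^{1}_{\mathrm{loc}}$ and the supports of the mollified test functions to stay in a fixed compact set (and where the adjoint identity should really read $(k\ast\delta,h)=(k,h\ast\tilde{\delta})$ with $\tilde{\delta}(z)=\delta(z^{-1})$, a right convolution, rather than $(k,\delta\ast h)$); so on this delicate point your write-up is the more explicit of the two. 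The one residual caveat, shared by both proofs, is that the final globalization (your inner-regularity step, the paper's duality citation) implicitly assumes $\sigma$-compactness or $\sigma$-finiteness of the relevant sets, a hypothesis the theorem does not state.
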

  \begin{proof}
     If we assume $k\in L^{p,\infty}(G)), $ we can write for $f\in L^1(G),$
    \begin{equation}\label{necessary}
      \Vert Tf\Vert_{L^{p,\infty}}=\Vert k\ast f\Vert_{L^{p,\infty}}\leq \Vert k\Vert_{L^{p,\infty}}\Vert f\Vert_{L^1},
    \end{equation}  where we have used the weak-Young inequality (see Theorem 1.2.13 of \cite{GrafakosBook}). For the converse assertion, let us assume that $T$ is a bounded operator of weak type $(1,p).$ Then, we have the inequality
    \begin{equation}\label{sufficienty}
      \Vert Tf\Vert_{L^{p,\infty}}\leq \Vert T\Vert_{\mathscr{B}(L^1, L^{p,\infty})}\Vert f\Vert_{L^1},
    \end{equation} in particular, for every $\varepsilon>0$ and $\delta_\varepsilon$ as in \eqref{Lemma} we have
    \begin{equation}
      \Vert T\delta_\varepsilon \Vert_{L^{p,\infty}}=\Vert k\ast \delta_\varepsilon \Vert_{L^{p,\infty}}\leq \Vert T\Vert_{\mathscr{B}(L^1, L^{p,\infty})}\Vert \delta_\varepsilon\Vert_{L^1}\leq  c\cdot \Vert T\Vert_{\mathscr{B}(L^1, L^{p,\infty})} .
    \end{equation} Consequently we obtain the inequality
    \begin{eqnarray}
    \Vert k\ast \delta_\varepsilon \Vert_{L^{p,\infty}}\leq c\cdot  \Vert T\Vert_{\mathscr{B}(L^1, L^{p,\infty})} ,
    \end{eqnarray}
    uniformly on $\varepsilon.$ The subset $\{k\ast \delta_\varepsilon\}$ is then bounded uniformly in $L^{p,\infty}(G).$ Since $L^{p,\infty}(G)$ is the dual of the Lorentz space $L^{p',1}(G)$ (see Theorem 1.4.17 of \cite{GrafakosBook}, where $p'$ is the conjugate exponent of $p$) the functions $k\ast \delta_\varepsilon$  are contained in a fixed multiple of the unit ball in $ L^{p',1}(G)^*\equiv  L^{p,\infty}(G).$ By the Banach-Alaoglu theorem this is a compact set in the $\textnormal{weak}^*$ topology. Then, there exists a subsequence $\varepsilon_k\rightarrow 0,$ such that the sequence $k\ast \delta_{\varepsilon_k}$ converges to some function $\varkappa \in  L^{p',1}(G)^*\equiv  L^{p,\infty}(G), $ in the $\textnormal{weak}^*$ topology. So, we have
    \begin{equation}\label{eq}
      \lim_{k\rightarrow\infty}\int\limits_{G}g(x)k\ast \delta_{\varepsilon_k}(x)d\mu(x)=\int\limits_{G}g(x)\varkappa(x) d\mu(x),\,\,g\in L^{p',1}(G).
    \end{equation}In order to end the proof we only need to show that $k=\varkappa.$ In fact,  by properties of the convolution, we have for a real function   $h\in C_0(G),$
    \begin{equation}\label{convo}
      (k*\delta_{\varepsilon_k},h)_{L^2}=(k,\delta_{\varepsilon_k}*h )_{L^2}.
    \end{equation}
    Because $\delta_{\varepsilon_k}*h$ converges to $h$ in $C_0(G),$ we have $(k,\delta_{\varepsilon_k}*h )_{L^2}\rightarrow (k,h)_{L^2}.$ On the other hand, if $k\rightarrow \infty,$ $(k*\delta_{\varepsilon_k},h)_{L^2}\rightarrow (\varkappa,h)_{L^2}.$ So, we have
    \begin{equation}\label{ttttttt}
      (k,h)_{L^2}=(\varkappa,h)_{L^2},\,\,h\in C_0(G),
    \end{equation}
    and consequently $k=\varkappa,$ a.e.w. Thus, we end the proof of the theorem.
\end{proof}
\begin{remark}
Although our main result has been announced for (left) convolution operator of the form
\begin{equation}\label{T}
  Tf(x)\equiv  k*f(x):=\int\limits_{G}k(y)f(y^{-1}x)d\mu(y),\,\,f\in C_{0}(G),
\end{equation} the assertion remains valid for (right) convolution operators
\begin{equation}\label{T2}
  Tf(x)\equiv  f*k(x):=\int\limits_{G}f(y)k(y^{-1}x)d\mu(y),\,\,f\in C_{0}(G).
\end{equation}
\end{remark}
We end this paper with natural examples where our main result can be useful.
\begin{example} Let us consider the discrete group $G=\mathbb{Z}^n,$ in this case, the Sobolev-Hardy-Littlewood-Stepanov theorem has been proved in \cite{CardonaZn}. Convolution operators in this setting take the form
\begin{equation}\label{zn}
  tf(s)=\sum_{n'\in\mathbb{Z}^n}k(s-n')f(n'). \end{equation}
 By using the Fourier transform $\mathscr{F}_{\mathbb{Z}^n}$ on $\mathbb{Z}^n,$ convolution operators are multipliers:
  \begin{equation}\label{multiplierzn}
    tf(s)=\int_{[0,1]^n}e^{i2\pi s\theta}m(\theta)(\mathscr{F}_{\mathbb{Z}^n}f)(\theta)d\theta,
  \end{equation} with $m=\mathscr{F}_{\mathbb{Z}^n}k.$ Now, $t$ is of weak type $(1,p),$ $1<p<\infty$ if and only if $k\in \ell^{p,\infty}(\mathbb{Z}^n).$
\end{example}
\begin{example}
Let us assume that $G$ is a compact Lie group (examples of compact Lie groups are $n$-dimensional torus $G=\mathbb{T}^n$ or $G=\textnormal{SU}(2)$), and $\widehat{G}$ is its unitary dual. If $\mathscr{F}_{G}$ is the Fourier transform of the group, convolution operators appear as multipliers of the form
\begin{equation}\label{multigcompact}
  T_{\sigma}f(x)=\sum_{\pi\in \widehat{G}}d_\pi \textnormal{tr}[\pi(x)\sigma(\pi)(\mathscr{F}_{G}f(\pi))].
\end{equation}
\end{example}For non-compact Lie groups as nilpotent Lie groups (natural examples are $G=\mathbb{R}^n$ or $G=\mathbb{H}^n$ which denotes the Heisenberg group), multipliers appear in the form
\begin{equation}\label{multigncompact}
  T_{\sigma}f(x)=\int\limits_{ \widehat{G}}\textnormal{tr}[\pi(x)\sigma(\pi)(\mathscr{F}_{G}f(\pi))]d\pi,
\end{equation} where $d\pi$ is the Plancherel measure on $\widehat{G}.$ The convolution kernel of $T$ in both cases, is given by $k=\mathscr{F}_{G}^{-1}\sigma,$ and we also have in both cases that $T_\sigma$ is of weak type $(1,p)$ if and only if $k\in L^{p,\infty}.$ In terms of the so called symbol $\sigma$ of the operator $T_\sigma,$ the Fourier multipier $T_\sigma$ is of weak type $(1,p)$ if and only if $\sigma\in \mathscr{F}_{G}(L^{p,\infty}(G)). $ For more properties about multipliers in compact Lie groups, and graded Lie groups we refer the reader to the references Fischer and Ruzhansky\cite{FR2} and Ruzhansky and Turunen \cite{Ruz}.

\bibliographystyle{amsplain}

\end{document}